\providecommand{\U}[1]{\protect\rule{.1in}{.1in}}
\newtheorem{theorem}{Theorem}[section]
\newtheorem{definition}[theorem]{Definition}
\newtheorem{remark}[theorem]{Remark}
\newtheorem{lemma}[theorem]{Lemma}
\newtheorem{proposition}[theorem]{Proposition}
\newtheorem{corollary}[theorem]{Corollary}
\title{Measure-theoretic sequence entropy pairs and mean sensitivity}
\date{}
\author{Felipe García-Ramos and Víctor Muñoz-López}
\newcommand{\Z}{\mathds{Z}}
\newcommand{\N}{\mathds{N}}
\newcommand{\s}{\mathcal{S}}
\newcommand{\B}{\mathcal{B}}
\newcommand{\diam}{\mbox{diam}}
\let\epsilon\varepsilon
\begin{document}

\maketitle

\abstract{We characterize measure-theoretic sequence entropy pairs of continuous abelian group actions using mean sensitivity. This solves an open question mentioned by Li and Yu \cite{li2021mean}. As a consequence of our results we provide a simpler characterization of Kerr and Li's independence sequence entropy pairs ($\mu$-IN-pairs) when the measure is ergodic and the group is abelian. }

\section{Introduction}
Given a dynamical system (or a group action) with positive entropy one might wonder which points of the phase space contribute to the entropy (and which not). One approach to answer this question was given in Blanchard's seminal paper \cite{blanchard1993disjointness}, where entropy pairs were introduced. In this approach, the answer of this question is not a subset of the phase space $X$ but a subset of the product space $X^2$. It turns out that a system has positive topological entropy if and only if there is an entropy pair. This sets the basis of what is now known as Local entropy theory (for a survey see \cite{glasner2009local}). This theory has provided ground for building new bridges from dynamics to other areas of mathematics like combinatorics \cite{kerr2007independence}, point-set topology \cite{darji2017chaos}, group theory, operator algebras \cite{chung2015homoclinic,barbieri2022markovian} and descriptive set theory \cite{darji2021local}. Of particular interest for this paper is Kerr and Li's characterization of entropy pairs of amenable group actions using the combinatorial notion of independence \cite{kerr2007independence}. 

Positive (measure) entropy can also be localized using measure entropy pairs for a measure (as defined in \cite{blanchard1995entropy}). Furthermore, these  pairs can also be characterized using the concept of independence \cite{Kerr2009}. However, the definition of independence used for this characterization is quite more involved and technical than the topological one (see Section 2.4). The aim of this paper is to bring more clarity to the measure-theoretic theory of independence pairs in the particular case of sequence entropy. 

Sequence entropy was introduced by Kushnirenko and it provided the first link between the functional analytic ergodic theory of von Neumann and the entropy-related ergodic theory of Kolmogorov's school, by proving that a system has pure point spectrum if and only if it has zero sequence entropy \cite{kushnirenko1967metric}. 

As in the classic case, sequence entropy pairs appear exactly when a system has positive sequence entropy \cite{Huang2004}. By replacing arbitrarily large sets instead of positive density, Kerr and Li also characterized sequence entropy pairs using independence \cite{kerr2007independence,Kerr2009}. 

We characterize measure sequence entropy pairs of abelian group actions (Theorem \ref{main}) using the so called mean sensitivity pairs (introduced in \cite{Garcia-Ramos2017}). This solves an open question mentioned in the introduction of \cite{li2021mean}. Measure mean sensitivity is a statistical version of sensitivity to initial conditions used in chaotic dynamics. As an application of our result, we provide a simpler version of independence pairs for sequence entropy (Theorem \ref{independence}). 

There are previous results that indicate that sequence entropy pairs satisfy some type of sensitivity with respect to a measure \cite{Huang2011,li2021density,li2021mean}. Nonetheless, the previous studied notions were too weak to induce a characterization. Theorem \ref{main} generalizes \cite[Theorem 38]{Garcia-Ramos2017}, but since that result does not work with independence some new tools are needed, such as Lemma \ref{lmm:BigNeighbourhoodsIntersection}.

The paper is organized as follows. In Section 2 we give definitions of the main concepts used in the paper (such as entropy, independence and sensitivity). In Section 3 we prove the main technical lemma (Lemma \ref{lmm:BigNeighbourhoodsIntersection}) and the main results of the paper (Theorem \ref{main} and Theotem \ref{independence}). 
Finally, in Section 4 we introduce a stronger definition, diam-mean sensitivity pairs; these pairs are always sequence entropy pairs but we do not know if the converse holds.

\section{Preliminaries}
Throughout this paper $X$ represents a compact metric space with metric $d$. We denote by $\B(X)$ the set of all Borel sets of $X$.
Let $\mu$ a Borel probability measure for $X$. We denote by $\B_{\mu}^{+}(X)$ the set of all Borel sets with positive measure. 

\subsection{Amenable groups}

In this paper $G$ denotes a countable group with identity $e$. 
A sequence $\{F_n\}_{n\in\N}$ of nonempty finite subsets of $G$
is called a \textbf{Følner sequence} if $|sF_n\Delta F_n|/|F_n|\to 0$
as $n\to\infty$ for every $s\in G$. In general, we will simply denote this sequence with $\{F_n\}$. 
A group is called \textbf{amenable} if it admits a Følner sequence. Every abelian group is amenable.

Let $\{F_n\}$ be a Følner sequence and $S\subseteq G$.
We define the lower density of $S$ as
\[\underline{D}(S)=\liminf_{n\to\infty} \frac{|S\cap F_n|}{|F_n|},\]
and the upper density of $S$ as
\[\overline{D}(S)=\limsup_{n\to\infty} \frac{|S\cap F_n|}{|F_n|}.\]

\begin{definition}
Let $ \{F_n\} $ be a Følner sequence of $G$.
We say that $\{F_n\}$ is \textbf{tempered} if there exists $C>0$ such that 
\[\left|\bigcup_{k=1}^{n-1} F_k^{-1}F_n\right|\leq C |F_n|, \mbox{ for all } n>1.\]
\end{definition}

Every Følner sequence has a subsequence that is tempered. 
\subsection{Group actions and sequence entropy}

 
By an action of the group $G$ on $X$ we mean a map $\alpha : G \times X \rightarrow X$ such that, $\alpha(s,\alpha(t,x)) = \alpha(st,x)$ and $\alpha(e,x) = x$ for all $x \in X$ and $s,t\in G$. For simplicity we refer to the action as $G\curvearrowright X$, and we denote the image of a pair $(s,x)$ simply as $sx$. 

Given a continuous group action, we say a Borel probability measure is \textbf{ergodic} if it is $G$-invariant and every $G$-invariant set has measure $0$ or $1$. Throughout this paper every measure is a probability measure and we will omit writing this. 

\begin{theorem}
[\cite{lindenstrauss2001pointwise}]
Let $G\curvearrowright X$ be a continuous action, $\mu$ an ergodic measure, $f$ an integrable function and $\{F_n\}$ a tempered Følner sequence. Then
\[\lim_{n\to\infty}\frac{1}{|F_n|} \sum_{s\in F} f(sx)=\int f(x) d\mu(x) \quad\mu\mbox{-a.e.}\]
\end{theorem}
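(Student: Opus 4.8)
The plan is to follow the standard architecture for pointwise ergodic theorems: establish a weak-type $(1,1)$ maximal inequality for the averaging operators, verify almost-everywhere convergence on a dense subclass of $L^1(\mu)$, and then invoke the Banach principle to upgrade to all of $L^1(\mu)$. Write $A_n f(x) = \frac{1}{|F_n|}\sum_{s\in F_n} f(sx)$ for the ergodic averages and $f^{*}(x)=\sup_n A_n|f|(x)$ for the associated maximal function. The goal then reduces to showing that $\mu(\{x : f^{*}(x) > \lambda\}) \le \frac{C'}{\lambda}\,\|f\|_{1}$ for a constant $C'$ depending only on the temperedness constant $C$, together with a.e.\ convergence of $A_n g$ for $g$ ranging over a dense class. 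Since the maximal inequality makes the set $\{f\in L^1(\mu): A_nf \text{ converges a.e.}\}$ closed in $L^1(\mu)$, density closes the argument, and the limit is identified as $\int f\,d\mu$ by comparing with the $L^1$ mean ergodic theorem and using ergodicity.

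The heart of the argument, and the step where temperedness is indispensable, is the maximal inequality. I would derive it by transference from a purely combinatorial covering lemma on the group $G$, namely Lindenstrauss's random Vitali-type covering lemma. That lemma asserts that from any family of right-translates of $F_1,\dots,F_n$ covering a given finite subset of $G$, one can extract — through an averaged, probabilistic selection procedure — a subfamily of bounded multiplicity that still covers a definite proportion of the union. The temperedness hypothesis $\bigl|\bigcup_{k<n}F_k^{-1}F_n\bigr|\le C|F_n|$ is precisely the input that keeps the expected overlap of the randomly chosen tiles under control, yielding overlap bounds independent of $n$. This is the step I expect to be the \emph{main obstacle}: unlike the case $G=\Z^d$, there is no deterministic Euclidean-style Vitali lemma to fall back on, so the random selection must be engineered so that the Følner property absorbs boundary effects while temperedness absorbs overlaps.

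To pass from the covering lemma to the maximal inequality on $(X,\mu)$, I would apply the Calderón transference principle adapted to amenable actions: fix a large $N$, push the group-level covering estimate forward along the orbit maps $s\mapsto f(sx)$, and integrate against the $G$-invariant measure $\mu$, so that the bounded multiplicity of the cover reappears as the factor $C'$ in front of $\|f\|_1$; here the invariance of $\mu$ is what makes the transference valid. For the dense class I would work in $L^2(\mu)$ and use von Neumann's mean ergodic theorem: by ergodicity the space of $G$-invariant vectors is the constants, so $A_n f \to \int f\,d\mu$ in $L^2$, and it remains to obtain a.e.\ convergence to $0$ on a dense subset of the orthocomplement, i.e.\ on coboundaries, where the averages can be estimated directly. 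Finally, combining the maximal inequality with a.e.\ convergence on this dense set gives a.e.\ convergence of $A_n f$ for every $f\in L^1(\mu)$, and matching the pointwise limit against the $L^1$ mean ergodic theorem identifies it with $\int f\,d\mu$, completing the proof.
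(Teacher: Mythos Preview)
The paper does not prove this theorem at all; it simply quotes it as a known result from \cite{lindenstrauss2001pointwise} and moves on. So there is no ``paper's own proof'' to compare against---the authors use the pointwise ergodic theorem only as a black box (to speak of generic points in the proof of Theorem~\ref{main}).

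That said, your sketch is an accurate outline of Lindenstrauss's original argument: the weak-type $(1,1)$ maximal inequality obtained via a probabilistic Vitali-type covering lemma on $G$ (with temperedness controlling the expected overlap), Calder\'on transference to the measure-preserving system, and the Banach principle applied after checking a.e.\ convergence on the dense class of $L^2$-coboundaries plus constants. Nothing is missing at the level of a plan; the only substantive work you have not written out is the covering lemma itself, which is where the genuine content lies. In the context of the present paper, however, no proof is expected---a citation suffices.
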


Let $A$ be a measurable set. We say $x$ is a \textbf{generic point for $A$} is it satisfies the pointwise ergodic theorem for $f=1_A$. 

Let $G\curvearrowright X$ be a continuous action, $\mu$ a $G$-invariant (probability) measure,
$S=\{s_i\}_{i=0}^\infty$ an increasing sequence of non negative integers
and $\mathcal{P}$ a finite measurable partition of $X$.
The \textbf{Shannon entropy} of $\mathcal{P}$ is given by
$$H_\mu(\mathcal{P})=-\sum_{A\in\mathcal{P}}\mu(A)\log\mu(A).$$
The \textbf{sequence entropy of} $\mathcal{P}$ with
respect to $G\curvearrowright X$ and $\mu$ along $S$ is defined by
\[h_\mu^S(X,\mathcal{P})=\limsup \frac{1}{n}
H_\mu\left( \bigvee_{i=0}^{n-1} T^{-s_i}\mathcal{P} \right).\]
The \textbf{sequence entropy} of $G\curvearrowright X$ with respect $\mu$ along $S$ is
\[h_\mu^S(X,G)=\sup_{\mathcal{P}} h_\mu^S(X,\mathcal{P}),\]
where the supremum is taken over all finite measurable partitions $\mathcal{P}$.

Sequence entropy can be studied locally using sequence entropy pairs, introduced in \cite{Huang2004}. 
\begin{definition}
Let $G\curvearrowright X$ be a continuous group action and $\mu$ an invariant measure.
We say that $(x,y)\in X^2$ is a $\mu$-\textbf{sequence entropy pair}
if for any finite measurable partition $\mathcal{P}$ such that there is no $P\in\mathcal{P}$ with $x,y\in\overline{P}$,
there exists $S\subseteq G$ with $h_\mu^S(X,\mathcal{P})>0$.
\end{definition}

\subsection{Mean sensitivity}

Sensitivity with respect to a measure was introduced in \cite{Huang2011}. Mean sensitivity with respect to a measure is a statistical form of sensitivity introduced in \cite{Garcia-Ramos2017}. In contrast with $\mu$-sensitivity, $\mu$-mean sensitivity is invariant under measure isomorphism. 
\begin{definition}
Let $\{F_n\}$ be a Følner sequence of $G$, $G\curvearrowright X$ a continuous group action and $\mu$ a Borel probability measure.
We say that $G\curvearrowright X$ is $\mu$\textbf{-mean sensitive}
if there exists $\epsilon>0$ such that for every $A\in\B_{\mu}^{+}(X)$
there exist $x,y\in A$ such that
\[\limsup_{n\to\infty} \frac{1}{|F_n|}\sum_{s\in F_n} d(sx,sy)>\epsilon.\]
\end{definition}

When $\mu$ is an invariant measure, the previous notion does not depend on the choice of Følner sequence \cite{Garcia-Ramos2019}. 
Now we will define the local notion. 

\begin{definition}
Let $\{F_n\}$ be a Følner sequence of $G$, $G\curvearrowright X$ a continuous group action and $\mu$ an invariant measure.
We say that $(x,y) \in X^2$ is a $\mu$-\textbf{mean sensitivity pair} if
$x\neq y$ and for all open neighbourhoods $U_x$ of $x$ and $U_y$ of $y$,
there exists $\epsilon>0$ such that for every $A\in \mathcal{B}_X^+$
there exist $p,q\in A$ such that
$$\overline{D}(\{s: sp\in U_x \mbox{ and } sq\in U_y\})>\epsilon.$$
We denote the set of $\mu$-mean sensitivity pairs by $S_\mu^m(X,G)$.
\end{definition}



We will now define sensitivity with respect to an $L^2$ function.

\begin{definition}
Let $\{F_n\}$ be a Følner sequence of $G$,  $G\curvearrowright X$ a continuous group action, $\mu$ a Borel probability measure and $f\in L^2(X,\mu)$.
We define
\[d_f(x,y)=\limsup\left(
\frac{1}{|F_n|}\sum_{s\in F_n} |f(sx)-f(sy)|^2\right)^{1/2}.\]
\end{definition}

\begin{definition}
Let $\{F_n\}$ be a Følner sequence of $G$,  $G\curvearrowright X$ a continuous group action, $\mu$ an invariant measure and $f\in L^2(X,\mu)$.
We say that $G\curvearrowright X$ is $\mu$-$f$-\textbf{mean sensitive} if there exists
$\epsilon>0$ such that for every $A\in \B_{\mu}^{+}(X)$ there exist $x,y\in A$
such that $d_f(x,y)>\epsilon$.
In this case we say that $f$ is $\mu$\textbf{-mean sensitive}.
We denote the set of $\mu$-mean sensitive functions with $H_{ms}$
\end{definition}

\begin{remark}
We have that $G\curvearrowright X$ is $\mu$-$1_B$-mean sensitive
if and only if there exists 
$\epsilon>0$ such that for every $A\in\B_{\mu}^{+}(X)$
there exist $p,q\in A$ such that
$\overline{D}(\{s\in G: sp\in B \mbox{ and }sq \in B^c\})>\epsilon$.

\end{remark}

\subsection{Independence}
\begin{definition}
Let $X$ be a set and $(A_1,A_2)$ a pair of subsets of $X$.
Let $E:G\to 2^X$, where $2^X$ is the power set of $X$.
We say that a set $I\subseteq G$ is an \textbf{independence set
for $A$ relative to $E$} if for every nonempty finite subset $F\subseteq I$
and any map $\sigma:F\to\{1,2\}$ we have
$\bigcap_{s\in F}(E(s)\cap s^{-1}A_{\sigma(s)}) \neq \emptyset$.
\end{definition}

We denote by $\B'_{\mu}(X,\epsilon)$ the set of all maps $E:G\to\B(X)$ such that
$\mu(E(s))\geq 1-\epsilon$, for all $s\in G$.
\begin{definition}
\label{def:indfiniteset}
Let $G\curvearrowright X$ be a continuous group action and $\mu$ an invariant measure.
For $A_1,A_2\subseteq X$ and $\epsilon>0$
we say that $(A_1,A_2)$ has $(\epsilon,\mu)$-\textbf{independence
over arbitrarily large finite sets}
if there exists $c>0$ such that for every $N>0$
there is a finite set $F\subseteq G$ with $|F|>N$
such that for every $E\in\B'_{\mu}(X,\epsilon)$
there is an independence set $I\subseteq F$ for $(A_1,A_2)$ relative to $E$
with $|I|\geq c|F|$.
\end{definition}
\begin{definition}
\label{def:INpair}
Let $G\curvearrowright X$ be a continuous group action and $\mu$ an invariant measure. We say that $(x,y)\in X^2$ is an \textbf{$IN_{\mu}$ pair}
if for every neighbourhoods $U_x$ of $x$ and $U_y$ of $y$
there exists $\epsilon>0$ such that
$(U_x,U_y)$ has $(\epsilon,\mu)$-independence
over arbitrarily large finite sets.
\end{definition}

\begin{theorem}[\cite{Kerr2009}]
\label{thm:INpair}
Let $G$ be an amenable group, $G\curvearrowright X$ a continuous group action and $\mu$ an invariant measure. A pair $(x,y)\in X^2$
is an $IN_{\mu}$ pair if and only if it is a $\mu$-sequence entropy pair.
\end{theorem}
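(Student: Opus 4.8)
The plan is to prove both implications by translating the growth of Shannon entropy of joined partitions into the combinatorial density of independence sets, with the family $\B'_{\mu}(X,\epsilon)$ serving as the bridge between topological nonemptiness and positivity of measure. The combinatorial engine is the Sauer--Shelah--Karpovsky--Milman lemma: a family of subsets of a finite set $F$ of cardinality larger than $\sum_{j<k}\binom{|F|}{j}$ shatters some $I\subseteq F$ with $|I|\geq k$. Applied to the nonempty atoms of $\bigvee_{s\in F}s^{-1}\mathcal{P}$, this converts the statement ``the join has many atoms'' into ``there is a large independence set for $(A_1,A_2)$.''

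First I would show that a $\mu$-sequence entropy pair is an $IN_{\mu}$ pair. Given neighbourhoods $U_x,U_y$, shrink them so that $\overline{U_x}\cap\overline{U_y}=\emptyset$ and choose a finite Borel partition $\mathcal{P}$ with atoms $A_1\subseteq U_x$ and $A_2\subseteq U_y$ and with no atom whose closure contains both $x$ and $y$. By hypothesis there is $S=\{s_i\}$ with $h_\mu^S(X,\mathcal{P})>0$, so along a subsequence $\frac{1}{n}H_\mu(\bigvee_{i<n}s_i^{-1}\mathcal{P})$ stays bounded below; since linear growth of Shannon entropy forces the number of atoms of measure bounded below to grow exponentially, the join has exponentially many such atoms. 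Feeding these into Sauer--Shelah produces, on arbitrarily large finite index sets, independence sets for $(A_1,A_2)$ of size proportional to the index set. The remaining point is the relativization to $E\in\B'_{\mu}(X,\epsilon)$: deleting from each coordinate a set of measure $<\epsilon$ removes only a controlled fraction of the positive-measure cells, so the exponential count, and hence the proportional independence set, survives uniformly over all such $E$, yielding $(\epsilon',\mu)$-independence over arbitrarily large finite sets.

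Conversely, to show an $IN_{\mu}$ pair is a $\mu$-sequence entropy pair, take any finite partition $\mathcal{P}$ with no atom whose closure contains both $x$ and $y$. The finite union of atom-closures missing $x$ is closed and misses $x$, so its complement is an open neighbourhood $U_x$; defining $U_y$ symmetrically gives neighbourhoods subordinate to $\mathcal{P}$ whose associated atom-groups are disjoint. Applying the $IN_{\mu}$ hypothesis to $(U_x,U_y)$ furnishes, for arbitrarily large finite $F$ and every $E\in\B'_{\mu}(X,\epsilon)$, an independence set $I\subseteq F$ with $|I|\geq c|F|$; enumerating a nested sequence of such $I$ produces the sequence $S$. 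The key is that the robustness ``for every $E$ with $\mu(E(s))\geq 1-\epsilon$'' morally forces a positive proportion of the $2^{|I|}$ sign patterns $\sigma\colon I\to\{1,2\}$ to have $\mu(\bigcap_{s\in I}s^{-1}A_{\sigma(s)})$ bounded below: if too few cells carried positive measure, one could absorb the rest into a single $E\in\B'_{\mu}(X,\epsilon)$ and destroy the independence. Hence $H_\mu(\bigvee_{s\in I}s^{-1}\mathcal{P})$ is of order $|I|\log 2$, giving $h_\mu^S(X,\mathcal{P})>0$.

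The main obstacle is exactly this measure-relative conversion in both directions: passing between the purely combinatorial ``nonempty intersection'' in the definition of an independence set and the measure-positivity needed to bound Shannon entropy from below. The device that makes it work is the quantifier over $\B'_{\mu}(X,\epsilon)$, which encodes precisely the $\epsilon$-robustness of independence under small-measure deletions; making the Sauer--Shelah count uniform over this family and correctly tracking the constants $c$ and $\epsilon$ is the delicate part. Amenability enters only softly, through Følner averaging and through the fact that for sequence entropy the relevant notion is independence over arbitrarily large finite sets rather than over sets of positive density.
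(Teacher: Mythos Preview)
The paper does not prove this theorem at all: it is stated as a citation from \cite{Kerr2009} and used as a black box (in the proof of Theorem~\ref{main}). There is therefore no ``paper's own proof'' to compare your proposal against.

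As for the sketch itself, the broad architecture---Sauer--Shelah combinatorics to pass from exponential atom counts to proportional independence sets, and the quantifier over $\B'_{\mu}(X,\epsilon)$ as the device converting topological nonemptiness into measure positivity---is indeed the shape of the Kerr--Li argument. However, both directions as written are too informal to stand as proofs. In the forward direction, the sentence ``deleting from each coordinate a set of measure $<\epsilon$ removes only a controlled fraction of the positive-measure cells'' hides the real work: one must show that the exponential count of atoms of measure bounded below survives \emph{uniformly} over all $E\in\B'_{\mu}(X,\epsilon)$, and this requires a careful averaging/pigeonhole step, not just a remark. In the converse direction, the phrase ``morally forces a positive proportion of the $2^{|I|}$ sign patterns to have measure bounded below'' is the entire content of that implication; the actual argument needs to exhibit a specific $E$ (built from the small-measure cells) that would violate independence if too few patterns had significant measure, and to quantify ``too few'' correctly so that the resulting $E$ lands in $\B'_{\mu}(X,\epsilon)$. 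Your outline identifies the right mechanisms but does not execute either of these steps.
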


\subsection{Almost periodicity}
Let $G\curvearrowright X$ be a continuous group action and $\mu$ an invariant measure.
We define the \textbf{Koopman representation}  $\kappa:G \to \mathcal{B}(L^2(X,\mu))$
by $\kappa(s)f(x)=f(s^{-1}x)$, for all $s\in G$, $f\in L^2$ and $x\in X$,
where $\mathcal{B}(L^2(X,\mu))$ is the space of all bounded linear operators on $L^2(X,\mu)$.

\begin{definition}
Let $G\curvearrowright X$ be a continuous group action, $\mu$ an invariant measure and $f\in L^2(X,\mu)$.
We say that $f$ is \textbf{almost periodic} function
if $\overline{\kappa(G)(f)}$ is compact as a subset of $L^2(X.\mu)$.
We denote by $H_{ap}$ the set of almost periodic functions.
\end{definition}


\begin{theorem}\cite[Theorem 1.15]{Garcia-Ramos2019}
\label{thm:AperiodicFunc=MeanSensFunc}
Let $G$ be an abelian group, $\{F_n\}$ a Følner sequence, $G\curvearrowright X$ a continuous group action and $\mu$ an ergodic measure.
We have that $H_{ms}\subset H_{ap}^c$. Furthermore, if  $\{F_n\}$ is a tempered Følner sequence, then $H_{ms}= H_{ap}^c$. 
\end{theorem}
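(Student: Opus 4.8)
The plan is to prove the two inclusions separately, exploiting that for abelian $G$ and ergodic $\mu$ the space $H_{ap}$ is the Kronecker factor, i.e.\ the $L^2$-closure of the span of the eigenfunctions; consequently $f\in H_{ap}^c$ if and only if its orthogonal projection $f_c=P_{H_{ap}^\perp}f$ onto the continuous-spectrum part is nonzero. Direction $H_{ms}\subseteq H_{ap}^c$ will use only the Følner property, while the reverse inclusion will use the tempered hypothesis through the pointwise ergodic theorem.

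First I would prove $H_{ms}\subseteq H_{ap}^c$ by contraposition: assuming $f\in H_{ap}$ I produce, for every $\epsilon>0$, a set $A\in\B_{\mu}^{+}(X)$ of $d_f$-diameter at most $\epsilon$, contradicting $f\in H_{ms}$. I first treat a finite combination of eigenfunctions $g=\sum_{j=1}^{m}c_j\phi_j$, with $\kappa(s)\phi_j=\lambda_j(s)\phi_j$ and the $\lambda_j$ distinct characters of $G$. Expanding $|g(sx)-g(sy)|^2$ and averaging, the off-diagonal terms carry a nontrivial character $\lambda_k\lambda_j^{-1}$ whose Følner averages vanish (via $(\chi(s)-1)\tfrac{1}{|F_n|}\sum_{t\in F_n}\chi(t)\to0$, using only the Følner property), so that the genuine limit
\[ d_g(x,y)^2=\sum_{j=1}^{m}|c_j|^2\,|\phi_j(x)-\phi_j(y)|^2 \]
holds for every Følner sequence. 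Partitioning the range of the measurable map $(\phi_1,\dots,\phi_m):X\to\C^m$ into small boxes yields a positive-measure set of arbitrarily small $d_g$-diameter. For general $f\in H_{ap}$ I approximate by such a $g$ with $\|f-g\|_2<\eta$; since $f-g$ is again almost periodic its averages converge genuinely, so $\int\!\!\int d_{f-g}(x,y)^2\,d\mu\,d\mu\le 2\eta^2$, and a Fubini--Chebyshev argument produces a center $x_0$ whose $d_{f-g}$-ball has measure close to $1$. Intersecting this ball with the positive-measure set of small $d_g$-diameter and using the triangle inequality $d_f\le d_g+d_{f-g}$ finishes this inclusion.

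For the reverse inclusion $H_{ap}^c\subseteq H_{ms}$, assume $f\notin H_{ap}$ and write $f=f_d+f_c$ with $f_d\in H_{ap}$ and $f_c\neq0$ of purely continuous spectral type. The core computation is
\[ \tfrac{1}{|F_n|}\sum_{s\in F_n}|f(sx)-f(sy)|^2 = A_n(|f|^2)(x)+A_n(|f|^2)(y)-2\,\mathrm{Re}\,A_n(f\otimes\bar f)(x,y), \]
where $A_n$ denotes the Følner averages on $X$ and on $X\times X$. Using temperedness, Lindenstrauss' pointwise theorem on $(X,\mu)$ sends the two marginal terms to $\|f\|_2^2$ a.e., and its non-ergodic version on $(X\times X,\mu\times\mu)$ sends the product term to the conditional expectation $P_{\mathcal I}(f\otimes\bar f)$ onto the diagonally invariant $\sigma$-algebra $\mathcal I$. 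By Wiener's lemma the continuity of the spectrum of $f_c$ forces $\tfrac{1}{|F_n|}\sum_s|\langle\kappa(s)f_c,f_c\rangle|^2\to0$, whence a Cauchy--Schwarz estimate kills the mixed terms and gives $P_{\mathcal I}(f\otimes\bar f)=P_{\mathcal I}(f_d\otimes\bar f_d)$, so that $d_f(x,y)^2=d_{f_d}(x,y)^2+2\|f_c\|_2^2\ge 2\|f_c\|_2^2$ for $\mu\times\mu$-a.e.\ $(x,y)$. Since this holds a.e., every $A\in\B_{\mu}^{+}(X)$ has $(\mu\times\mu)(A\times A)>0$ and therefore contains $(p,q)$ with $d_f(p,q)\ge\sqrt2\,\|f_c\|_2$; thus $f\in H_{ms}$ with witness $\epsilon=\|f_c\|_2$.

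The main obstacle is this last inclusion: converting the spectral decorrelation of $f_c$ into a pointwise lower bound valid on every positive-measure product set. This requires the pointwise ergodic theorem on the non-ergodic product system, identifying the a.e.\ limit of the product averages with $P_{\mathcal I}(f\otimes\bar f)$, and it is exactly here that the tempered Følner hypothesis is indispensable (the mean ergodic theorem alone would control only a subsequence and could not pin down the $\limsup$ defining $d_f$). In the forward inclusion the analogous interchange of limit and integral is harmless precisely because almost periodic functions have genuinely convergent, Følner-independent averages, which is why that direction needs no temperedness.
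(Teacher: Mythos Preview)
The paper does not supply a proof of this statement; it is quoted from \cite{Garcia-Ramos2019} without argument, so there is nothing in the present paper to compare your proposal against.

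Judged on its own, your outline is essentially correct. The reverse inclusion is handled well: Lindenstrauss' pointwise theorem on $X$ and on the (non-ergodic) product $(X\times X,\mu\times\mu)$, combined with the structural fact that the diagonally invariant functions are spanned by $\phi\otimes\bar\phi$ for eigenfunctions $\phi$ (which is where abelianness and ergodicity enter), gives the a.e.\ identity $d_f^2=d_{f_d}^2+2\|f_c\|_2^2$, and temperedness is used exactly where you indicate. In the forward inclusion there is a small gap: after approximating $f\in H_{ap}$ by a finite eigenfunction sum $g$, you assert that the averages defining $d_{f-g}$ ``converge genuinely'' and hence $\iint d_{f-g}^2\,d\mu\,d\mu\le 2\eta^2$; but $f-g$ is merely almost periodic, not a finite eigenfunction sum, and moving the $\limsup$ inside the double integral needs a domination hypothesis (reverse Fatou) that you have not arranged. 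The repair is routine---first reduce to bounded $f$, using that $H_{ap}$ is the $L^2$-space of the Kronecker factor and is therefore closed under truncation, so that the averages are uniformly bounded---but it should be stated explicitly.
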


\section{Characterization of sequence entropy pairs}
\subsection{Mean sensitivity pairs}

\begin{lemma}[\cite{kushnirenko1967metric}]
\label{lmm:Kushnirenko}
Let $(X,\mu)$ be a probability space
and $\{\xi_n\}$ be a sequence of two-set partitions of $X$,
with $\xi_n=\{P_n,P_n^c\}$.
The closure of $\{1_{P_1},1_{P_2},\ldots\}\subseteq L^2(X,\mu)$ is compact
if and only if for all subsequences $m_i$
\[\lim_{n\to \infty} \frac{1}{n}H\left(\bigvee_{i=1}^n \xi_{m_i}\right)=0.\]
\end{lemma}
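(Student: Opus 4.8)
The plan is to derive both implications from a single pair of estimates linking the $L^2$ geometry of the indicators to conditional Shannon entropy, combined with the chain rule $H(\bigvee_{i=1}^n \xi_{m_i})=\sum_{i=1}^n H(\xi_{m_i}\mid\mathcal F_{i-1})$, where $\mathcal F_{i-1}$ is the finite algebra generated by $\xi_{m_1},\dots,\xi_{m_{i-1}}$. The two estimates are: an upper bound, asserting a function $\phi$ with $\phi(t)\to 0$ as $t\to 0$ such that $H(\{P,P^c\}\mid\{Q,Q^c\})\le\phi(\mu(P\triangle Q))$; and a lower bound, asserting that if $p=\mathbb E(1_P\mid\mathcal F)$ then $H(\{P,P^c\}\mid\mathcal F)=\int h(p)\,d\mu\ge c\,\|1_P-\mathbb E(1_P\mid\mathcal F)\|_2^2$ for a universal constant $c>0$, where $h$ is the binary entropy function. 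The lower bound uses that the $L^2$ projection onto $\mathcal F$-measurable functions is the conditional expectation, so $\|1_P-\mathbb E(1_P\mid\mathcal F)\|_2^2=\int p(1-p)\,d\mu$, together with the elementary inequality $h(p)\ge c\,p(1-p)$; both estimates rest on $\|1_P-1_Q\|_2^2=\mu(P\triangle Q)$ and routine properties of $h$.

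For the direction ``relative compactness $\Rightarrow$ the limit is $0$ along every subsequence'', I would fix $\epsilon>0$ and, using total boundedness, a finite $\epsilon$-net $1_{P_{n_1}},\dots,1_{P_{n_N}}$ of $\{1_{P_k}\}$. Given any subsequence and any $n$, I assign to each $i\le n$ the label $k(i)$ of a nearest net point, so members of a common class satisfy $\mu(P_{m_i}\triangle P_{m_{i'}})\le 4\epsilon^2$. In the chain rule each class contributes one term bounded by $\log 2$ (its first index) and, for every later index $i$ in that class, a term $H(\xi_{m_i}\mid\mathcal F_{i-1})\le H(\xi_{m_i}\mid\xi_{m_{i'}})\le\phi(4\epsilon^2)$, obtained by discarding all but one earlier partition of the same class and applying the upper estimate. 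Summing over the at most $N$ classes gives $\frac1n H(\bigvee_{i=1}^n\xi_{m_i})\le \frac{N\log 2}{n}+\phi(4\epsilon^2)$, and letting $n\to\infty$ and then $\epsilon\to 0$ yields the limit $0$.

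For the converse I argue by contraposition. If the closure is not compact it is not totally bounded, so some $b_0>0$ admits no finite $b_0$-net. The point I expect to be most delicate is that mere $b_0$-separation of a subsequence does not force a positive entropy rate (a pairwise independent family can have sublinear join entropy, so a badly chosen separated subsequence fails), so the subsequence must instead be built greedily against the growing algebra. Since each $\mathcal F_{i-1}$ is finite dimensional and projection onto a finite-dimensional subspace is compact, non-total-boundedness supplies an index $m_i>m_{i-1}$ with $\|1_{P_{m_i}}-\mathbb E(1_{P_{m_i}}\mid\mathcal F_{i-1})\|_2\ge b_0/2$; otherwise all but finitely many indicators would lie within $b_0$ of a fixed compact set, yielding a finite $b_0$-net. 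The lower estimate then gives $H(\xi_{m_i}\mid\mathcal F_{i-1})\ge c\,b_0^2/4$ for every $i$, so by the chain rule $\frac1n H(\bigvee_{i=1}^n\xi_{m_i})\ge c\,b_0^2/4>0$ for all $n$, and this subsequence witnesses the failure of the right-hand condition.

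The principal obstacles are thus concentrated in the converse: recognizing that the naive separated subsequence need not work and replacing it by the greedy choice measured against the conditional expectation, and establishing the quantitative lower bound $H(\xi\mid\mathcal F)\ge c\,\|1_P-\mathbb E(1_P\mid\mathcal F)\|_2^2$ that converts $L^2$-distance from $\mathcal F$-measurability into an entropy increment. The compactness-to-entropy direction is, by comparison, a routine net-plus-chain-rule computation once the modulus $\phi$ is available.
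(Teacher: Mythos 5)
Your proof is correct. Note that the paper itself contains no argument for this lemma --- it is quoted verbatim from Kushnirenko's 1967 paper --- so the only meaningful comparison is with the classical proof, which yours essentially reproduces: chain rule plus a Fano-type modulus of continuity for conditional entropy in the forward direction, and in the converse the greedy selection against the growing finite algebra combined with the identity $H(\xi\mid\mathcal F)=\int h(p)\,d\mu \ge c\int p(1-p)\,d\mu = c\,\lVert 1_P-\mathbb{E}(1_P\mid\mathcal F)\rVert_2^2$. You also correctly identified the one genuine subtlety, namely that a merely $b_0$-separated subsequence does not suffice (pairwise independent XOR-type bits are uniformly separated yet have join entropy $O(\log n)$), which is exactly why the selection must be made relative to the conditional expectation onto the algebra already generated.
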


The next theorem is obtained directly from Lemma \ref{lmm:Kushnirenko}.

\begin{theorem}\label{thm:AperiodicFunc=SeqEntr}
Let $G\curvearrowright X$ be a continuous group action, $\mu$ an ergodic measure and $B\in \B_{\mu}^{+}(X)$.
Then $1_B\in H_{ap}$ if and only if $h_\mu^{\s}(X,\{A,A^c\})=0$
for any sequence $S\subseteq G$.
\end{theorem}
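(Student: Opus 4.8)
The plan is to translate both implications directly through Lemma \ref{lmm:Kushnirenko}, using the single observation that the indicator functions generated by the Koopman orbit of $1_B$ are exactly those appearing in the sequence-entropy partitions. Concretely, for $s\in G$ one has $\kappa(s^{-1})1_B(x)=1_B(sx)=1_{s^{-1}B}(x)$, so $1_{s^{-1}B}=\kappa(s^{-1})1_B$ and hence $\kappa(G)(1_B)=\{1_{s^{-1}B}:s\in G\}$. Since $T^{-s}\{B,B^c\}=\{s^{-1}B,(s^{-1}B)^c\}$ is the two-set partition determined by $1_{s^{-1}B}$, the Shannon entropies entering $h_\mu^S$ are precisely the ones controlled by Lemma \ref{lmm:Kushnirenko}. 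Recall also that, by definition, $1_B\in H_{ap}$ means that $\overline{\kappa(G)(1_B)}$ is compact in $L^2(X,\mu)$.

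For the forward implication I would assume $\overline{\kappa(G)(1_B)}$ is compact and fix an arbitrary sequence $S=\{s_i\}\subseteq G$. Writing $\xi_i=T^{-s_i}\{B,B^c\}$, the family $\{1_{s_i^{-1}B}\}$ is contained in $\kappa(G)(1_B)$, so its closure is a closed subset of a compact set and is therefore compact. Lemma \ref{lmm:Kushnirenko} then gives $\lim_n \frac1n H_\mu(\bigvee_{i=1}^n \xi_{m_i})=0$ for every subsequence $\{m_i\}$; taking the full sequence, and noting that the limsup in the definition of $h_\mu^S$ coincides with this limit, yields $h_\mu^S(X,\{B,B^c\})=0$.

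For the converse I would argue by contraposition. Assume $\overline{\kappa(G)(1_B)}$ is not compact. As $L^2(X,\mu)$ is complete and $G$ is countable, $\kappa(G)(1_B)$ is a countable set that is not totally bounded, so there exist $\delta>0$ and distinct $s_i\in G$ with $\|1_{s_i^{-1}B}-1_{s_j^{-1}B}\|_2\geq\delta$ for all $i\neq j$. The closure of $\{1_{s_i^{-1}B}\}$ is then an infinite $\delta$-separated set, hence not compact, so the characterization of Lemma \ref{lmm:Kushnirenko} must fail on the entropy side: there is a subsequence $\{m_i\}$ for which the stated limit is not zero, and since the Shannon entropies are nonnegative this forces $\limsup_n \frac1n H_\mu(\bigvee_{i=1}^n \xi_{m_i})>0$. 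Setting $S=\{s_{m_i}\}$ we obtain $h_\mu^S(X,\{B,B^c\})>0$, contradicting the hypothesis; therefore $1_B\in H_{ap}$.

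The content is essentially a dictionary between the functional-analytic and the entropy sides, so the only genuine point requiring care is the quantifier management in Lemma \ref{lmm:Kushnirenko}: the forward direction uses only the full-sequence limit, whereas the converse exploits the existence of a single bad subsequence, and for the latter I must first pass from ``non-compact orbit closure'' to a concrete $\delta$-separated sequence via total boundedness. I expect this extraction step, together with checking that restricting to a countable subfamily cannot lose compactness, to be the main (though routine) obstacle; the identification $1_{s^{-1}B}=\kappa(s^{-1})1_B$ and the index bookkeeping between $\bigvee_{i=0}^{n-1}$ and $\bigvee_{i=1}^{n}$ are then immediate.
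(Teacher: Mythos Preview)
Your proposal is correct and is exactly the unpacking the paper has in mind: the text simply states that the theorem is obtained directly from Lemma~\ref{lmm:Kushnirenko}, and your argument is the natural way to make that precise via the identification $\kappa(G)(1_B)=\{1_{s^{-1}B}:s\in G\}$ and the total-boundedness extraction for the converse. There is nothing to add.
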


\begin{proposition}
\label{prop:ida}
Let $\{F_n\}$ be a Følner sequence of $G$, $G\curvearrowright X$ a continuous group action and $\mu$ an ergodic measure. If $(x,y)\in X^2$ is a
$\mu$-mean sensitivity pair then it is a $\mu$-sequence entropy pair.
\end{proposition}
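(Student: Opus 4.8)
The plan is to reduce the statement to a two-set partition and then assemble the two characterizations already in hand. Fix a finite measurable partition $\mathcal{P}$ none of whose cells has both $x$ and $y$ in its closure; I want to produce a sequence $S\subseteq G$ with $h_\mu^S(X,\mathcal{P})>0$. I would set $B=\bigcup\{P\in\mathcal{P}:y\notin\overline{P}\}$, a union of cells of $\mathcal{P}$, so that $\{B,B^c\}$ is coarser than $\mathcal{P}$. Monotonicity of Shannon entropy under refinement then yields $h_\mu^S(X,\mathcal{P})\geq h_\mu^S(X,\{B,B^c\})$, so it suffices to find $S$ with $h_\mu^S(X,\{B,B^c\})>0$.

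The first genuine step is a topological observation about this $B$. Writing $B^c=\bigcup\{P\in\mathcal{P}:y\in\overline{P}\}$ as a finite union and using that no cell contains both $x$ and $y$ in its closure, each cell appearing in $B^c$ has $y$ in its closure and hence cannot have $x$ there; since closure commutes with finite unions, $x\notin\overline{B^c}$, so $U_x:=X\setminus\overline{B^c}$ is an open neighbourhood of $x$ contained in $B$. Symmetrically $y\notin\overline{B}$, so $U_y:=X\setminus\overline{B}$ is an open neighbourhood of $y$ contained in $B^c$ (both sets are nonempty, as the cells containing $x$ and $y$ witness).

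Next I would transfer the sensitivity of the pair to $1_B$. Applying the $\mu$-mean sensitivity pair hypothesis to these particular neighbourhoods $U_x,U_y$ gives $\epsilon>0$ such that every $A\in\B_{\mu}^{+}(X)$ contains points $p,q$ with $\overline{D}(\{s:sp\in U_x,\ sq\in U_y\})>\epsilon$. Because $U_x\subseteq B$ and $U_y\subseteq B^c$, that set is contained in $\{s:sp\in B,\ sq\in B^c\}$, which therefore also has upper density above $\epsilon$; by the remark characterizing $\mu$-$1_B$-mean sensitivity this says exactly that $1_B\in H_{ms}$. Now Theorem \ref{thm:AperiodicFunc=MeanSensFunc} gives $H_{ms}\subseteq H_{ap}^c$ (only this inclusion is needed, and it holds for an arbitrary Følner sequence, so temperedness is not required here), whence $1_B\notin H_{ap}$; in particular $1_B$ is not a.e.\ constant, so $0<\mu(B)<1$ and $B\in\B_{\mu}^{+}(X)$. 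Theorem \ref{thm:AperiodicFunc=SeqEntr}, whose hypothesis on $B$ is now met, then produces the required $S\subseteq G$ with $h_\mu^S(X,\{B,B^c\})>0$, closing the argument.

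The conceptually routine but error-prone part is the closure bookkeeping behind $U_x\subseteq B$ and $U_y\subseteq B^c$: it is essential that closures distribute over finite unions and that the separation hypothesis on $\mathcal{P}$ is applied cell-by-cell. Otherwise the proof is an assembly of the inclusion $H_{ms}\subseteq H_{ap}^c$ with the entropy characterization, so I would mainly double-check the direction of entropy monotonicity (we need $\{B,B^c\}$ coarser than $\mathcal{P}$) and confirm that the ergodic (and, for Theorem \ref{thm:AperiodicFunc=MeanSensFunc}, abelian) hypotheses are in force.
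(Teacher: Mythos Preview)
Your proof is correct and follows essentially the same line as the paper's: pass to a two-set partition, find open neighbourhoods $U_x\subseteq B$ and $U_y\subseteq B^c$, deduce $1_B\in H_{ms}$, and then combine Theorem~\ref{thm:AperiodicFunc=MeanSensFunc} with Theorem~\ref{thm:AperiodicFunc=SeqEntr}. You are in fact more careful than the paper, which tacitly restricts to two-element partitions; you make the coarsening $\{B,B^c\}\preceq\mathcal{P}$ and the entropy monotonicity explicit, and you verify $0<\mu(B)<1$ before invoking Theorem~\ref{thm:AperiodicFunc=SeqEntr}.
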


\begin{proof}
Let $(x,y)$ be a $\mu$-mean sensitivity pair and $\mathcal{P}=\{P,P^c\}$
a finite partition, such that $x\in P\setminus P^c$ and $y\in P^c\setminus P$.
This implies that there exist neighbourhoods $U_x$ of $x$ and $U_y$ of $y$,
such that $U_x\subseteq P$ and $U_y\subseteq P^c$.
Note that $G\curvearrowright X$ is $\mu$-$1_P$-diam-mean sensitive.
Hence, by Theorem \ref{thm:AperiodicFunc=MeanSensFunc}
and Theorem \ref{thm:AperiodicFunc=SeqEntr} we obtain that
there exists $S\subseteq \N$ such that $h_\mu^S(\mathcal{P},T)>0$.
Thus, $(x,y)$ is a $\mu$-sequence entropy pair.
\end{proof}
The following lemma is standard, e.g., see \cite[Proposition 5.8]{Huang2011}. 

\begin{lemma}\label{pro:BigSetsIntersection}
Let $(X,\B(X),\mu)$ be a Borel probability space, $a>0$,
and $\{E_s\}_{s\in G}$ a sequence of measurable sets
with $\mu(E_s)\geq a$, for all $s\in G$.
There exists $N$ such that for any set $F\subseteq G$ with $|F|\geq N$
there exist $s_F,t_F\in F$ such that
$\mu(E_{s_F}\cap E_{t_F})>0$.
\end{lemma}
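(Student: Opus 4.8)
The plan is to prove this by a pigeonhole argument on a finite partition of the interval $[0,1]$ into subintervals of length less than $a$. First I would fix an integer $k$ with $1/k < a$, and for each $s \in G$ consider the real number $\mu(E_s) \in [a,1]$. Partitioning $[0,1]$ into the $k$ subintervals $I_j = [(j-1)/k, j/k)$ for $j=1,\ldots,k$ (with the last one closed at $1$), each value $\mu(E_s)$ lands in some $I_j$. The key observation is that if two sets $E_s$ and $E_t$ each have measure at least $a > 1/k$, then $\mu(E_s \cap E_t) \geq \mu(E_s) + \mu(E_t) - 1$; this is positive precisely when $\mu(E_s) + \mu(E_t) > 1$. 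So I want to find $s,t$ whose measures are both large enough to force this sum above $1$.

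The cleaner route, which avoids worrying about the exact subinterval, is to set $N = k+1$ and argue as follows. Given any $F \subseteq G$ with $|F| \geq N$, by pigeonhole there must exist two distinct elements $s_F, t_F \in F$ with $\mu(E_{s_F})$ and $\mu(E_{t_F})$ lying in the same subinterval $I_j$ — wait, that is not quite what forces the sum above $1$. Instead I would argue directly: since there are $|F| > k$ values $\mu(E_s)$ all lying in $[a,1]$, and I have chosen $k$ so that $1/k < a$, I can instead partition $[a,1]$ more carefully, or simply observe that among sufficiently many values in $[a,1]$ two of them sum to more than $1$. Concretely, pick $N$ large enough (depending only on $a$) that among any $N$ reals in $[a,1]$ two have sum exceeding $1$: this follows because if all pairwise sums were at most $1$, then at most one value could exceed $1/2$, forcing all but one of the $N$ values into $[a, 1/2]$, and partitioning $[a,1/2]$ into finitely many intervals of length less than $a - 1/2$...

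Let me restate the argument more robustly. I would partition $[0,1]$ into $k$ half-open subintervals each of length $1/k < a$, so $N = k+1$ guarantees by pigeonhole that some subinterval $I_j$ contains two of the values $\mu(E_{s_F}), \mu(E_{t_F})$ with $s_F \neq t_F$. Both values then exceed the left endpoint of $I_j$, which is at least... this only gives $\mu(E_{s_F}), \mu(E_{t_F}) \geq a$, not sum above $1$. The honest fix is: since $\mu(E_s) \geq a$ for every $s$, and the values lie in $[a,1]$, choose $k$ with $1/k < 2a - 1$ if $a > 1/2$ (immediate, take $N=2$), and otherwise partition and use that two values in a common short interval of length $< 2a-1$ cannot both be far below $1/2$. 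The robust statement is: take $N$ so that any $N$ points of $[a,1]$ contain two whose sum exceeds $1$, which exists since the subset of $[a,1]$ on which two points can have sum $\leq 1$ is confined to $[a, 1-a]$, an interval that admits only finitely many $\delta$-separated points.

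The main obstacle is purely the bookkeeping of choosing $N$ correctly, and I expect the cleanest presentation to combine the elementary inequality $\mu(E_s \cap E_t) \geq \mu(E_s) + \mu(E_t) - 1$ with the observation that any collection of more than $1/(2a-1)$ points (when $a \leq 1/2$) in $[a,1]$ must contain two with sum strictly greater than $1$. Once $s_F, t_F$ with $\mu(E_{s_F}) + \mu(E_{t_F}) > 1$ are located, the conclusion $\mu(E_{s_F} \cap E_{t_F}) > 0$ is immediate from inclusion–exclusion, and since $N$ depends only on $a$ and not on $F$, this completes the proof.
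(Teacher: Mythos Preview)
Your approach has a genuine gap: you are trying to locate two indices $s,t$ with $\mu(E_s)+\mu(E_t)>1$, so that the inclusion--exclusion bound $\mu(E_s\cap E_t)\geq \mu(E_s)+\mu(E_t)-1$ forces positivity. But when $a\leq 1/2$ this is hopeless. Take for instance all $E_s$ equal to a single fixed set of measure exactly $a$; then every $\mu(E_s)=a$, and no finite collection of these values, however large, contains two with sum exceeding $1$. Your remark that such values are ``confined to $[a,1-a]$'' and that this interval ``admits only finitely many $\delta$-separated points'' is beside the point: the numbers $\mu(E_s)$ need not be distinct, let alone separated. The bound $1/(2a-1)$ you propose for $N$ is not even positive when $a\leq 1/2$.

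The standard argument (which the paper cites but does not reproduce) applies the pigeonhole to the union rather than to a single pair. Suppose, for contradiction, that $\mu(E_s\cap E_t)=0$ for all distinct $s,t\in F$. Then the sets $\{E_s:s\in F\}$ are essentially pairwise disjoint, so
\[
1\;\geq\;\mu\Bigl(\bigcup_{s\in F}E_s\Bigr)\;=\;\sum_{s\in F}\mu(E_s)\;\geq\;|F|\cdot a,
\]
whence $|F|\leq 1/a$. Choosing any integer $N>1/a$ therefore suffices. The essential difference from your attempt is that one sums \emph{all} the measures at once, not just two of them; this is what makes the argument work uniformly for every $a>0$.
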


The following lemma is the main tool used to provide a connection between independence over finite sets and sensitivity. 

\begin{lemma}\label{lmm:BigNeighbourhoodsIntersection}
Let $G\curvearrowright X$ be a continuous group action, $\mu$ an ergodic measure, $U_x,U_y\subseteq X$ open sets and $\varepsilon>0$. 
If $(U_x,U_y)$ has
$(\epsilon,\mu)$-independence over arbitrarily large finite sets, then for every $A\in \B_{\mu}^{+}(X)$, there exists $(s_A,t_A)\in R_A$
such that $\mu(s_A^{-1}U_x\cap t_A^{-1}U_y)\geq \epsilon$, where
\[R_A=\{(s,t)\in G^2:\mu(s^{-1}A\cap t^{-1}A)>0\}.\]
\end{lemma}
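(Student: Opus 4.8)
The plan is to extract, from the $(\epsilon,\mu)$-independence hypothesis, a single pair $(s_A,t_A)$ for which the ``colored'' preimages $s_A^{-1}U_x$ and $t_A^{-1}U_y$ genuinely overlap on a set of measure at least $\epsilon$, and to do so while staying inside the return-time relation $R_A$. The crux is to choose the family $E\in\B'_\mu(X,\epsilon)$ cleverly so that the independence set it produces is forced to respect both the set $A$ and the required measure lower bound. First I would fix $A\in\B_\mu^+(X)$ and let $c>0$ be the constant from Definition \ref{def:indfiniteset}. The natural candidate for $E$ is built from $A$ itself: roughly, I want $E(s)$ to carry the information ``$s$ pushes $A$ into the right place'', but since $E$ must have measure at least $1-\epsilon$ for every $s$, I cannot simply take $E(s)=s^{-1}A$ unless $\mu(A)$ is large. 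So the first obstacle is reconciling the smallness of $A$ with the largeness requirement on $E$.

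To get around this I would argue by contradiction: suppose that for some $A\in\B_\mu^+(X)$ we have $\mu(s^{-1}U_x\cap t^{-1}U_y)<\epsilon$ for every $(s,t)\in R_A$. The idea is then to manufacture, for each $s\in G$, a ``bad'' set and package these into a map $E$ of the required density so that no large independence set can exist, contradicting independence over arbitrarily large finite sets. Concretely, for a finite candidate set $F$ I would consider how an independence set $I\subseteq F$ for $(U_x,U_y)$ relative to $E$ interacts with $A$: the independence condition, applied to the two-coloring that assigns color $1$ to some indices and $2$ to others, forces nonempty intersections of the form $\bigcap_{s}(E(s)\cap s^{-1}U_{\sigma(s)})$. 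I want to choose $E$ so that combining this with the assumption $\mu(s^{-1}U_x\cap t^{-1}U_y)<\epsilon$ yields a violation. This is exactly where Lemma \ref{pro:BigSetsIntersection} enters: applying it to a suitable family $\{E_s\}$ with uniform lower measure bound produces two indices $s_F,t_F\in I$ whose sets meet in positive measure, and I would engineer $E$ so that this positive-measure intersection lands inside the region governed by $R_A$, delivering a pair in $R_A$ with the forbidden overlap.

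The key technical step is the design of $E$. A workable choice is to set, for each $s$, $E(s)=X\setminus (s^{-1}U_x\cap t^{-1}U_y)$-type complements relative to the hypothesized small overlaps, or more precisely to let $E(s)$ encode that $s$ does not witness the bad overlap; because each forbidden overlap has measure strictly below $\epsilon$, the complement has measure at least $1-\epsilon$, so $E\in\B'_\mu(X,\epsilon)$ as required. With this $E$, any independence set $I\subseteq F$ of size $\geq c|F|$ yields, for each pair of colors and each pair of indices, a nonempty intersection that simultaneously avoids the bad overlaps and lies in the preimages $s^{-1}U_x$, $t^{-1}U_y$; pushing the density through $A$ via the pointwise ergodic theorem (Theorem of \cite{lindenstrauss2001pointwise}) relates membership in $s^{-1}A$ to positive measure and places the resulting pair in $R_A$. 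The main obstacle I anticipate is bookkeeping the interplay between the \emph{set-theoretic} nonemptiness guaranteed by independence and the \emph{measure-theoretic} positivity needed for $R_A$: independence only gives nonempty intersections, so I must upgrade these to positive measure, and this is precisely what Lemma \ref{pro:BigSetsIntersection} is tailored to accomplish once the family $\{E_s\}$ is assembled with a uniform measure floor. Ergodicity and the choice of generic points for $A$ will supply the final link, guaranteeing that the indices produced genuinely satisfy $\mu(s^{-1}A\cap t^{-1}A)>0$.
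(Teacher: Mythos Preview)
Your overall architecture matches the paper's: argue by contradiction, build $E$ from complements of the small overlaps $s^{-1}U_x\cap t^{-1}U_y$, and use Lemma~\ref{pro:BigSetsIntersection} to force a pair in $R_A$. But two points are muddled, and the second is the heart of the argument.

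First, the pointwise ergodic theorem and generic points play no role here; drop them. (They appear in the proof of Theorem~\ref{main}, not in this lemma.) The only consequence of invariance used is $\mu(s^{-1}A)=\mu(A)$ for all $s$.

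Second, and more importantly, you have the order of operations inverted when you say you will ``engineer $E$ so that this positive-measure intersection lands inside the region governed by $R_A$''. The pair in $R_A$ is \emph{not} produced by a clever $E$; it comes first, from Lemma~\ref{pro:BigSetsIntersection} applied to the family $E_s:=s^{-1}A$ with uniform floor $a=\mu(A)>0$. That lemma gives a threshold $N_0$ so that any subset of $G$ of size $\geq N_0$ contains two elements $s,t$ with $\mu(s^{-1}A\cap t^{-1}A)>0$, i.e.\ $(s,t)\in R_A$. Now use the independence hypothesis to get a finite $F_0$ with $|F_0|\geq N_0/c$, so that every independence set $I\subseteq F_0$ has $|I|\geq N_0$; hence any such $I$ already contains a pair $(s_I,t_I)\in R_A$. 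Only \emph{then} do you define $E$: it is the \emph{constant} map $E(g)=(s_I^{-1}U_x\cap t_I^{-1}U_y)^c$, which lies in $\B'_\mu(X,\epsilon)$ by the contradiction hypothesis. Choosing $\sigma(s_I)=x$, $\sigma(t_I)=y$ makes the independence intersection land in $(s_I^{-1}U_x\cap t_I^{-1}U_y)^c\cap s_I^{-1}U_x\cap t_I^{-1}U_y=\emptyset$, the desired contradiction. Your sketch never specifies that Lemma~\ref{pro:BigSetsIntersection} is applied to $\{s^{-1}A\}$, and without that you have no mechanism to land in $R_A$.
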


\begin{proof}
We will prove this by contradiction. Assume that there exists $A\in \B_{\mu}^{+}(X)$ such that $$\mu(s^{-1}U_x\cap t^{-1}U_y)<\epsilon$$
for all $(s,t)\in R_A$.
This implies that $\mu((s^{-1}U_x\cap t^{-1}U_y)^c)\geq 1-\epsilon$
for all $(s,t)\in R_A$.

By the independence hypothesis, there exists $c>0$ which satisfies the
definition of $(\epsilon,\mu)$-independence
over arbitrarily large finite sets  (Definition \ref{def:indfiniteset}).
Using Lemma \ref{pro:BigSetsIntersection} on $\{E_s\}_{s\in G}$ with $E_s=s^{-1}A$
we conclude that there is $N_0>0$ such that for any finite set $F\subseteq G$
with $|F|>N_0$ there exist $s_F,t_F\in F$ such that
\[\mu(s_F^{-1}A\cap t_F^{-1}A)>0.\]
Using Definition \ref{def:indfiniteset}, there exists a finite set $F_0$ with
$|F_0|\geq N_0/c$ such that for every $E\in\B'_{\mu}(X,\epsilon)$
there is an independence set $I\subseteq F_0$ such that
$|I|\geq c |F_0|\geq N_0$.
This implies that for every $\sigma:I\to \{x,y\}$ we have that

\begin{equation}
\label{ind}
\bigcap_{g\in I}E(g)\cap g^{-1}U_{\sigma(g)}\neq \emptyset.
\end{equation}

Furthermore, since $|I|\geq N_0$, there exists $s_I,t_I\in I$ such that
\[\mu(s_I^{-1}A\cap t_I^{-1}A)>0.\]

Let $E:G\to 2^X$ be the constant function with \[E(g)=(s_I^{-1}U_x\cap t_I^{-1}U_y)^c.\]
(The fact that it's constant will be important for Theorem \ref{independence}.)
Note that $E\in \B'_{\mu}(X,\epsilon)$.
Let $\sigma:I\to \{x,y\}$
with $\sigma(s_I)=x$ and $\sigma(t_I)=y$.
Then
\begin{align*}
\bigcap_{g\in I}E(g)\cap g^{-1}U_{\sigma(g)}
&\subseteq (E(s_I)\cap s_I^{-1}U_x) \cap (E(t_I)\cap t_I^{-1}U_y)\\
&= (s_I^{-1}U_x\cap t_I^{-1}U_y)^c \cap s_I^{-1}U_x\cap t_I^{-1}U_y\\
&=\emptyset.
\end{align*}
This is a contradiction to (\ref{ind}).
Therefore, there exists  $(s_A,t_A)\in R_A$ such that $\mu(s_A^{-1}U_x\cap t_A^{-1}U_y)>\epsilon$.
\end{proof}

A couple of relationships (but no characterization) between sequence entropy pairs and some type of sensitivity pairs are known.
In \cite[Theorem 5.9]{Huang2011} it was shown that every sequence entropy pair is a $\mu$-sensitivity pair. In \cite[Theorem 1.8]{li2021density} it was shown that sequence entropy pairs are density-sensitivity pairs. 

Now we provide a characterization.
\begin{theorem}
\label{main}
Let $G$ be an abelian group, $G\curvearrowright X$ a continuous group action and $\mu$ an ergodic measure.
The following are equivalent
\begin{enumerate}
    \item $(x,y)\in X^2$ is a $\mu$-sequence entropy pair.
    \item $(x,y)\in X^2$ is $\mu$-mean sensitivity pair with respect to a tempered  Følner sequence.
    \item $(x,y)\in X^2$ is a $\mu$-mean sensitivity pair with respect to every tempered Følner sequence.
\end{enumerate}

\end{theorem}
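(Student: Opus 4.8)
The plan is to prove the cycle $(1)\Rightarrow(3)\Rightarrow(2)\Rightarrow(1)$, so that all three statements are equivalent. Two of these links are essentially free. For $(3)\Rightarrow(2)$: since $G$ is abelian, hence amenable, it possesses a Følner sequence and therefore a tempered one, so a pair that is mean sensitive along \emph{every} tempered Følner sequence is in particular mean sensitive along \emph{some} tempered Følner sequence. For $(2)\Rightarrow(1)$: the tempered Følner sequence provided by (2) is in particular a Følner sequence, so Proposition \ref{prop:ida} applies verbatim and yields that $(x,y)$ is a $\mu$-sequence entropy pair. Hence all the content sits in $(1)\Rightarrow(3)$, and this is where I expect the real work.

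For $(1)\Rightarrow(3)$ I would first convert the entropy hypothesis into combinatorics. By Theorem \ref{thm:INpair} a $\mu$-sequence entropy pair is an $IN_\mu$ pair, so for any neighbourhoods $U_x$ of $x$ and $U_y$ of $y$ there is $\epsilon>0$ such that $(U_x,U_y)$ has $(\epsilon,\mu)$-independence over arbitrarily large finite sets. Feeding this into Lemma \ref{lmm:BigNeighbourhoodsIntersection} gives, for each $A\in\mathcal{B}_\mu^+(X)$, a pair $(s_A,t_A)$ with $\mu(s_A^{-1}A\cap t_A^{-1}A)>0$ and $\mu(s_A^{-1}U_x\cap t_A^{-1}U_y)\geq\epsilon$. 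The remaining task is to manufacture honest points $p,q\in A$ whose joint orbit enters $U_x\times U_y$ with upper density at least $\epsilon$.

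The key step, and the point where abelianness is indispensable, is as follows. Write $D=s_A^{-1}U_x\cap t_A^{-1}U_y$ (so $\mu(D)\geq\epsilon$) and $C=s_A^{-1}A\cap t_A^{-1}A$ (so $\mu(C)>0$), and fix any tempered Følner sequence $\{F_n\}$. By Lindenstrauss's pointwise ergodic theorem $\mu$-almost every point is generic for $D$ along $\{F_n\}$; intersecting that full-measure set with the positive-measure set $C$ lets me pick $z$ lying in both. Set $p=s_A z$ and $q=t_A z$, so that $z\in C$ forces $p,q\in A$. Whenever $gz\in D$ we have $s_A(gz)\in U_x$ and $t_A(gz)\in U_y$, and because $g s_A=s_A g$ this reads $gp\in U_x$ and $gq\in U_y$; thus $\{g:gz\in D\}\subseteq\{g:gp\in U_x,\ gq\in U_y\}$. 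Genericity of $z$ then yields $\overline{D}(\{g:gp\in U_x,\ gq\in U_y\})\geq\mu(D)\geq\epsilon$. Taking $\epsilon/2$ as the sensitivity constant secures the strict inequality in the definition, and since $s_A,t_A,D,C$ and $\epsilon$ do not depend on the Følner sequence (only the generic point $z$ does), the argument runs unchanged for every tempered Følner sequence, giving (3).

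The main obstacle is precisely this transfer: Lemma \ref{lmm:BigNeighbourhoodsIntersection} only delivers static, measure-level overlap of the translated neighbourhoods, and the difficulty is upgrading it to a dynamical, orbit-level density statement about actual points of $A$. Commutativity is exactly what allows a single pair $(s_A,t_A)$ to play two roles at once: a base change sending $z$ to $(p,q)\in A\times A$, and a label that can be slid past the running element $g$ so that the return-time inclusion holds; without it the inclusion $\{g:gz\in D\}\subseteq\{g:gp\in U_x,\ gq\in U_y\}$ breaks down. I would also record at the outset that $x\neq y$ (built into the mean sensitivity pair notion, and understood for sequence entropy pairs) so that the output is a legitimate $\mu$-mean sensitivity pair.
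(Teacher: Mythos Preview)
Your proposal is correct and follows essentially the same route as the paper: both use Theorem~\ref{thm:INpair} and Lemma~\ref{lmm:BigNeighbourhoodsIntersection} to produce $(s_A,t_A)$, then pick a generic point and exploit commutativity to manufacture $p,q\in A$ with the required return-time density. The only cosmetic difference is that the paper chooses a point generic for both $D$ and $C$ and then passes to its first entry time into $C$, whereas you more directly pick $z\in C$ from the full-measure set of $D$-generic points; your version is marginally cleaner but otherwise identical in substance.
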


\begin{proof}

Given 2.) we obtain 1.) with Proposition \ref{prop:ida}.
It only remains to prove that 1.) implies 3.). 
Let $(x,y)\in X^2$ be a $\mu$-sequence entropy pair,
$U_x$ and $U_y$ neighbourhoods of $x$ and $y$, and $A\in\B_{\mu}^{+}(X)$.
From Theorem \ref{thm:INpair}, there exists $\epsilon>0$ such that $(U_x,U_y)$ has
$(\epsilon,\mu)$-independence over arbitrarily large finite sets.
Using Lemma \ref{lmm:BigNeighbourhoodsIntersection},
there exists $(s,t)\in R_A$ such that $$\mu(s^{-1}U_x\cap t^{-1}U_y)>\epsilon.$$

Let $z\in X$ be a generic point of $s^{-1}U_x\cap t^{-1}U_y$ and $s^{-1}A\cap t^{-1}A$g,
$e\geq 0$ the first entry time of $z$ in $s^{-1}A\cap t^{-1}A$, $p=sez$ and $q=tez$.
Note that $p,q\in A$. 
If $gez\in s^{-1}U_x\cap t^{-1}U_y$ then
$gp=gsez\in U_x$ and $gq=gtez\in U_y$.
Therefore, for any tempered Følner sequence we have that
\[\overline{D}(\{g:gp \in U_x\mbox{ and } gq \in U_y\})
=\mu(s^{-1}U_x\cap t^{-1}U_y)\geq \epsilon.\]
\end{proof}

\subsection{Independence}
As a consequence of our techniques we provide a simpler characterization of $IN_{\mu}$ pairs when $\mu$ is an ergodic measure. 


We define $\B_{\mu}(X,\epsilon)=\{D\in \B(X):\mu(D)\geq 1-\varepsilon\}$.

The following definition is very similar to Kerr and Li's definition of measure IN-pairs but we use $\B$ instead of $\B'$. 
\begin{definition}
Let $G\curvearrowright X$ be a continuous group action and $\mu$ an invariant measure.

For $A_1,A_2\subseteq X$ and $\epsilon>0$
we say that $(A_1,A_2)$ has $\B$-$(\epsilon,\mu)$-\textbf{independence
over arbitrarily large finite sets}
if there exists $c>0$ such that for every $N>0$
there is a finite set $F\subseteq\Z_+$ with $|F|>N$
such that for every $D\in\B_{\mu}(X,\epsilon)$
there is an independence set $I\subseteq F$ for $(U_x,U_y)$ relative to $D$
with $|I|\geq c|F|$.

We say that $(x,y)$ is a \textbf{$\B$-$IN_{\mu}$ pair}
if for every neighbourhoods $U_x$ of $x$ and $U_y$ of $y$
there exists $\epsilon>0$ such that
$(U_x,U_y)$ has $\B$-$(\epsilon,\mu)$-independence 
over arbitrarily large finite sets.
\end{definition}

\begin{theorem}
\label{independence}
Let $G$ be an abelian group, $G\curvearrowright X$ a continuous group action and $\mu$ an ergodic measure. Then $(x,y)$ is an $IN_{\mu}$ pair if and only if it is a $\B-IN_{\mu}$ pair. 
\end{theorem}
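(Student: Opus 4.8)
The plan is to establish the equivalence by comparing the two notions of independence over arbitrarily large finite sets, the original $(\epsilon,\mu)$-independence using maps $E\in\B'_\mu(X,\epsilon)$ and the new $\B$-$(\epsilon,\mu)$-independence using constant sets $D\in\B_\mu(X,\epsilon)$. Since $\B_\mu(X,\epsilon)$ corresponds exactly to the constant maps inside $\B'_\mu(X,\epsilon)$, one direction is almost immediate: if $(U_x,U_y)$ has $(\epsilon,\mu)$-independence over arbitrarily large finite sets relative to all of $\B'_\mu(X,\epsilon)$, then in particular it has independence relative to every constant map, which is precisely $\B$-$(\epsilon,\mu)$-independence. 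Thus every $IN_\mu$ pair is a $\B$-$IN_\mu$ pair, and this costs essentially nothing.

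The substance is the converse, that a $\B$-$IN_\mu$ pair is an $IN_\mu$ pair. I would not attempt to pass directly from the weaker constant-set independence back to the full $\B'$-independence; instead I would route through the characterization already proven. The strategy is to show that a $\B$-$IN_\mu$ pair is a $\mu$-sequence entropy pair, and then invoke Theorem \ref{thm:INpair} (Kerr--Li) to conclude it is an $IN_\mu$ pair. Concretely, I would first observe that $\B$-$(\epsilon,\mu)$-independence is exactly the hypothesis needed to run the proof of Lemma \ref{lmm:BigNeighbourhoodsIntersection}: the key point, flagged in the parenthetical remark inside that proof, is that the witnessing map $E$ constructed there is \emph{constant}, equal to $(s_I^{-1}U_x\cap t_I^{-1}U_y)^c$. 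So the same contradiction argument goes through using only independence relative to constant sets $D\in\B_\mu(X,\epsilon)$, yielding for every $A\in\B_\mu^+(X)$ a pair $(s_A,t_A)\in R_A$ with $\mu(s_A^{-1}U_x\cap t_A^{-1}U_y)\geq\epsilon$.

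With that version of Lemma \ref{lmm:BigNeighbourhoodsIntersection} in hand, I would reproduce verbatim the final paragraph of the proof of Theorem \ref{main}: take a generic point $z$ of $s^{-1}U_x\cap t^{-1}U_y$ and of $s^{-1}A\cap t^{-1}A$, let $e$ be its first entry time into $s^{-1}A\cap t^{-1}A$, and set $p=sez$, $q=tez$. The pointwise ergodic theorem along a tempered Følner sequence then gives $\overline{D}(\{g:gp\in U_x\text{ and }gq\in U_y\})=\mu(s^{-1}U_x\cap t^{-1}U_y)\geq\epsilon$, so $(x,y)$ is a $\mu$-mean sensitivity pair, hence by Theorem \ref{main} a $\mu$-sequence entropy pair, hence by Theorem \ref{thm:INpair} an $IN_\mu$ pair. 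The main obstacle to check carefully is precisely the claim that the proof of Lemma \ref{lmm:BigNeighbourhoodsIntersection} uses the map $E$ only as a constant; once that is verified, the weaker $\B$-independence hypothesis suffices and the rest is a faithful replay of Theorem \ref{main}'s argument.
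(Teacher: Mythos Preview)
Your proposal is correct and follows exactly the paper's approach: one direction is trivial since constant maps form a subset of $\B'_\mu(X,\epsilon)$, and for the converse you exploit the fact (flagged by the parenthetical in the proof of Lemma \ref{lmm:BigNeighbourhoodsIntersection}) that only a constant $E$ is used there, so the lemma and hence the proof of Theorem \ref{main} go through under the weaker $\B$-independence hypothesis, yielding that a $\B$-$IN_\mu$ pair is a $\mu$-mean sensitivity pair, hence a $\mu$-sequence entropy pair, hence an $IN_\mu$ pair by Theorem \ref{thm:INpair}. The paper's own proof says precisely this in two sentences.
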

\begin{proof}
One direction is trivial. The other can be obtained by noting that in the proof of Lemma \ref{lmm:BigNeighbourhoodsIntersection} we actually only use $\B-IN_{\mu}$ pairs. 
\end{proof}

\section{Diam-mean sensitivity pairs}

Diam-mean sensitivity was introduced in \cite{Garcia-Ramos2017} and can be used to characterize when a maximal equicontinuous factor is 1-1 for $\mu$-almost every point \cite{garcia2021mean}. In this section we introduce the measure theoretic version of this concept. We provide some basic results which are adaptations from results in \cite{Garcia-Ramos2017,Huang2011}. Nonetheless, this new notion is still somewhat mysterious to the authors since we don't know if its invariant under isomorphism or not.

\begin{definition}
Let $G\curvearrowright X$ be a continuous group action and $\mu$ an invariant measure.
We say that $G\curvearrowright X$ is $\mu$\textbf{-diam-mean sensitive}
if there exists $\epsilon>0$ such that for every $A\in\B_{\mu}^{+}(X)$
we have that
\[\limsup_{n\to\infty} \frac{1}{|F_n|}\sum_{s\in F_n} diam(sA)>\epsilon.\]
\end{definition}

Note that if $G\curvearrowright X$ is $\mu$-mean sensitive then it is $\mu$-diam-mean
sensitive. 

We do not know if there exists a $\mu$-diam mean sensitive system with discrete spectrum. 

\begin{definition}

Let $G\curvearrowright X$ be a continuous group action and $\mu$ an invariant measure.
We say that $(x,y) \in X^2$ is a $\mu$-\textbf{diam-mean sensitivity pair} if
$x\neq y$ and for all open neighbourhoods $U_x$ of $x$ and $U_y$ of $y$,
there exists $\epsilon>0$ such that for every $A\in \mathcal{B}_X^+$
there exists $S\subseteq \Z_+$ with $\overline{D}(S)>\epsilon$
such that for every $s\in S$ there exist $p,q\in A$
such that $sp \in U_x$ and $sq \in U_y$.
We denote the set of $\mu$-diam-mean sensitivity pairs by $S_\mu^{dm}(X,G)$.
\end{definition}

\begin{proposition}
Let $G\curvearrowright X$ a continuous group action and $\mu$ an invariant measure. We have that $G\curvearrowright X$ is $\mu$-diam-mean sensitive
if and ony if $S_\mu^{dm}(X,G)\neq \emptyset$.

\end{proposition}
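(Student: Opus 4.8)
The plan is to prove both implications of the equivalence ``$G\curvearrowright X$ is $\mu$-diam-mean sensitive iff $S_\mu^{dm}(X,G)\neq\emptyset$'' by translating between the global quantifier structure (a single $\epsilon$ working for every $A\in\B_\mu^+(X)$) and the local, pair-based structure of the definition of a $\mu$-diam-mean sensitivity pair. The harder and more interesting direction will be showing that $\mu$-diam-mean sensitivity produces at least one such pair, since this requires extracting a \emph{fixed} pair $(x,y)$ out of a hypothesis that only guarantees, for each $A$, the existence of \emph{some} pair of points whose orbits separate.

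First I would prove the easy direction, namely that $S_\mu^{dm}(X,G)\neq\emptyset$ implies $\mu$-diam-mean sensitivity. Suppose $(x,y)\in S_\mu^{dm}(X,G)$. Since $x\neq y$, choose disjoint open neighbourhoods $U_x$ of $x$ and $U_y$ of $y$ with $d(U_x,U_y)\geq \delta>0$ for some $\delta$. The defining property of the pair gives an $\epsilon>0$ (depending on $U_x,U_y$) such that for every $A\in\B_\mu^+(X)$ there is a set $S\subseteq\Z_+$ with $\overline{D}(S)>\epsilon$ and, for each $s\in S$, points $p,q\in A$ with $sp\in U_x$ and $sq\in U_y$. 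For such $s$ we have $\diam(sA)\geq d(sp,sq)\geq\delta$, so the Cesàro averages of $\diam(sA)$ along the Følner sequence pick up mass at least $\delta$ on a set of upper density $>\epsilon$, yielding $\limsup_n \frac{1}{|F_n|}\sum_{s\in F_n}\diam(sA)\geq \epsilon\delta$. Taking $\epsilon\delta$ as the global constant (uniform in $A$ since $\epsilon$ and $\delta$ came only from the fixed neighbourhoods) gives $\mu$-diam-mean sensitivity.

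For the converse I would follow the standard localization argument used for the analogous topological and $\mu$-sensitivity results (the structure mirrors \cite{Huang2011,Garcia-Ramos2017}). Assume $\mu$-diam-mean sensitivity with constant $\epsilon>0$. The idea is to build a pair by compactness: cover $X\times X$ by finitely many products $U\times V$ of small open sets and argue that for some such product the separation persists on a positive-density set of times for every $A$. Concretely, fix a finite open cover $\{W_1,\dots,W_k\}$ of $X$ by sets of diameter less than $\epsilon/2$. For any $A\in\B_\mu^+(X)$ and any $s$ with $\diam(sA)>\epsilon$, the set $sA$ cannot be contained in a single $W_i$, so there exist $p,q\in A$ with $sp,sq$ lying in cover elements that are $\epsilon$-separated. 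Passing to the finitely many candidate pairs of cover elements and using a pigeonhole/diagonal argument over a countable generating family of positive-measure sets $A$, I would extract a single pair of open sets $(U,V)$ (hence, by shrinking and taking a limit point of center points, a single pair $(x,y)$ with $x\neq y$) such that for every $A$ the times realizing the $U$--$V$ separation have upper density bounded below by a constant multiple of $\epsilon$. The main obstacle is exactly this extraction step: the diam-mean sensitivity hypothesis quantifies the separating pair $(p,q)$ \emph{after} $A$, so one must interchange quantifiers to obtain neighbourhoods that work uniformly over all $A$, which requires combining the finiteness of the cover with a careful diagonalization over a countable dense (in $\B_\mu^+(X)$, e.g.\ measure-theoretically generating) subfamily of sets and checking that density lower bounds are preserved in the limit. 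Once $(x,y)$ is produced, verifying it satisfies the definition of a $\mu$-diam-mean sensitivity pair is routine, completing the proof.
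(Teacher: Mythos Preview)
Your easy direction ($S_\mu^{dm}\neq\emptyset\Rightarrow$ diam-mean sensitive) matches the paper's argument essentially verbatim.

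For the hard direction your sketch has a genuine gap. The pigeonhole step is fine: with a finite cover $\{W_1,\dots,W_k\}$ by sets of small diameter, for each $A\in\B_\mu^+(X)$ \emph{some} pair $(W_i,W_j)$ witnesses separation on a set of times of upper density $\gtrsim \epsilon/k^2$. The problem is the quantifier swap. Diagonalizing over a countable measure-dense family $\{A_n\}$ only gives, by pigeonhole on the finitely many $(i,j)$'s, a pair that works for \emph{infinitely many} $A_n$, not for all of them; and even if it worked for all $A_n$, the condition ``upper density $>\epsilon'$'' is not continuous for the measure-algebra topology on $A$, so density at a dense family does not pass to arbitrary $A\in\B_\mu^+(X)$. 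You acknowledge the obstacle but the proposed mechanism does not resolve it.

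The paper handles this by contradiction rather than direct extraction. Assuming $S_\mu^{dm}(X,G)=\emptyset$, for every $(x,y)$ in the compact set $X^\epsilon=\{(x,y):d(x,y)\geq\epsilon\}$ one finds neighbourhoods $U_x,U_y$ and, for each $\delta>0$, a \emph{bad} set $A_\delta(x,y)\in\B_\mu^+(X)$ on which the $(U_x,U_y)$-separation times have upper density $\leq\delta$. A finite subcover of $X^\epsilon$ by products $U_x\times U_y$ reduces to finitely many such bad sets, and then---this is the key device you are missing---\emph{ergodicity} is used to translate these finitely many bad sets so that they intersect in a single $A\in\B_\mu^+(X)$. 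This $A$ then has $\overline{D}(\{s:\diam(sA)>\epsilon\})\leq |F|\delta$, contradicting diam-mean sensitivity once $\delta$ is chosen small. (Note that the paper's proof invokes ergodicity although the proposition is stated for invariant $\mu$; you would need the same hypothesis, or an ergodic-decomposition argument, to make either approach go through.)
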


\begin{proof}

Suppose that $G\curvearrowright X$ is $\mu$-diam-mean sensitive
with sensitive constant $\epsilon_0>0$ and $\epsilon\in(0,\epsilon_0)$.
We consider the follow compact set
\[X^\epsilon=\{(x,y)\in X^2:d(x,y)\geq\epsilon\}.\]
Suppose that $S_\mu^{dm}(X,G)=\emptyset$.
This implies that for every $(x,y)\in X^\epsilon$
there exist open neighbourhoods of $x$ and $y$, $U_x$ and $U_x$,
such that for every $\delta>0$ there exists $A_\delta(x,y)\in \B_X^+$ such that
\[\overline{D}(\{s\in G:\exists p,q\in A_\delta(x,y) \mbox{ such that }
sp\in U_x, sq\in U_y\})\leq \delta.\]
There exists a finite set $F\subset X^\epsilon$ such that 
\[X^\epsilon\subseteq \bigcup_{(x,y)\in F} U_x\times U_y.\]
Let $\delta=\epsilon/|F|$ and
$S(x,y)=\{s\in G:\exists p,q\in A_\delta(x,y)\mbox{ such that }(sp,sq)\in U_x\times U_y\}$, for every $(x,y)\in F$.
Since $\mu$ is ergodic, for every $(x,y)\in F$ there exists $t(x,y)\in G$
such that $A:=\bigcap_{(x,y)\in F} t(x,y)A_\delta(x,y)\in \B^+_X$. Note that
\begin{align*}
S:&=\{s\in G:\exists p,q\in A\mbox{ such that } (sp,sq)\in X^\epsilon\}\\
&\subseteq\bigcup_{(x.y)\in F} \{s\in G:\exists p,q\in A\mbox{ such that } (sp,sq)\in U_x\times U_y\}\\
&\subseteq \bigcup_{(x,y)\in F} S(x,y)t(x,y)^{-1}.
\end{align*}
Hence $\overline{D}(S)\leq |F|\delta=\epsilon$.

On the other hand, since $G\curvearrowright X$ is $\mu$-diam-mean sensitive
and $\epsilon$ is smaller than the sensitive constant we have that
$\overline{D}(S)> \epsilon.$ That is a contradiction.

Now we prove the other direction.

Suppose that $S_\mu^{dm}(X,G)\neq \emptyset$. Let $(x,y)\in S_\mu^{dm}(X,G)$.
There exist neighbourhoods $U_x$ of $x$ and $U_x$ of $y$, with $d(U_x,U_y)>0$,
and $\epsilon>0$, such that for every $A\in \B_X^+$
there exists $S\subseteq G$ with $\overline{D}(S)>\epsilon$
such that for every $s\in S$ there exist $p,q\in A$
such that $sp\in U_x$ and $sq\in U_y$.
This implies that
$\overline{D}(\{s\in G:\diam(sA)>d(U_x,U_y)\})\geq \overline{D}(S)>\epsilon$.
Therefore, $G\curvearrowright X$ is $\mu$-diam-mean sensitive.
\end{proof}

A consequence of Theorem \ref{main} is the following. 
\begin{corollary}
Let $G$ be an abelian group, $G\curvearrowright X$ a continuous group action and $\mu$ an ergodic measure. Every $\mu$-sequence entropy pair is a $\mu$-diam-mean sensitivity pair.
\end{corollary}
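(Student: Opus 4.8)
The plan is to deduce this directly from Theorem \ref{main}, factoring the implication through $\mu$-mean sensitivity pairs and then observing that the diam-mean sensitivity condition is a formal weakening of the mean sensitivity condition.

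First I would fix a tempered Følner sequence $\{F_n\}$ and apply Theorem \ref{main}: since $(x,y)$ is a $\mu$-sequence entropy pair, it is a $\mu$-mean sensitivity pair with respect to $\{F_n\}$. In particular $x\neq y$, and for any open neighbourhoods $U_x$ of $x$ and $U_y$ of $y$ there is an $\epsilon>0$ such that every $A\in\B_{\mu}^{+}(X)$ admits points $p,q\in A$ with $\overline{D}(\{s:sp\in U_x \text{ and } sq\in U_y\})>\epsilon$.

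The second step is to compare the two definitions. Writing $S=\{s:sp\in U_x \text{ and } sq\in U_y\}$, the conclusion above gives $\overline{D}(S)>\epsilon$, and for every $s\in S$ the fixed points $p,q\in A$ satisfy $sp\in U_x$ and $sq\in U_y$. This is precisely the defining condition of a $\mu$-diam-mean sensitivity pair: the only difference between the two notions is that the diam-mean version allows the witnesses $p,q$ to depend on $s\in S$, whereas mean sensitivity forces a single pair $p,q$ to work simultaneously for all $s\in S$. Hence mean sensitivity is the stronger property and the desired inclusion $(x,y)\in S_\mu^{dm}(X,G)$ follows at once.

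I do not expect any genuine obstacle here: the entire content is carried by Theorem \ref{main}, and the remaining step is a one-line set-theoretic observation about which quantifier structure is stronger. The only point requiring minor care is bookkeeping about the Følner sequence and the upper density $\overline{D}$ appearing in both definitions—one should use the same tempered Følner sequence throughout (Theorem \ref{main} supplies the mean sensitivity property for \emph{every} such sequence, so this is harmless), and one may wish to note that the index set $\Z_+$ written in the definition of diam-mean sensitivity pair is to be read as $G$ in the general abelian setting.
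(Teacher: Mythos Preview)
Your proposal is correct and matches the paper's approach: the paper simply states the corollary as ``a consequence of Theorem \ref{main}'' with no further argument, and you have supplied exactly the intended one-line observation that the quantifier structure in the definition of $\mu$-diam-mean sensitivity pair is weaker than that of $\mu$-mean sensitivity pair.
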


We do not know if the converse of the previous corollary holds. 




\bibliographystyle{plain}
\bibliography{ref.bib}

\noindent Felipe Garc\'ia-Ramos,  fgramos@conacyt.mx. 
{\em  CONACyT, Universidad Aut\'onoma de San Luis Potos\'i, and Jagiellonian University.}\\

\noindent V\'ictor Mu\~noz-L\'opez, vmmunozlopez@gmail.com,
{\em Instituto de F\'isica, Universidad Aut\'onoma de San Luis Potos\'i, Mexico}
\end{document}